\theoremstyle{plain}
\newtheorem{theorem}{Theorem}[section]
\theoremstyle{definition}
\newtheorem{definition}[theorem]{Definition}
\theoremstyle{remark}
\newcommand{\abs}[1]{\lvert#1\rvert}
\newcommand{\norm}[1]{\lVert#1\rVert}
\newcommand{\Bignorm}[1]{\Bigl\lVert#1\Bigr\rVert}
\renewcommand{\mid}{\::\:}
\newcommand{\NN}{\mathbb{N}}
\newcommand{\veps}{\varepsilon}
\newcommand{\tow}{\stackrel{w^*}{\longrightarrow}}
\DeclareMathOperator{\codim}{codim}
\begin{document}
\baselineskip 18pt

\title{The invariant subspace problem for rank one perturbations}

\author[A.~Tcaciuc]{Adi Tcaciuc}
\address[A. Tcaciuc]{Mathematics and Statistics Department,
   Grant MacEwan University, Edmonton, Alberta, Canada T5J
   P2P, Canada}
\email{tcaciuca@macewan.ca}

\thanks{${}^1$ Research supported in part by NSERC (Canada)}
\keywords{Operator, invariant subspace, finite rank, perturbation}
\subjclass[2010]{Primary: 47A15. Secondary: 47A55}

\begin{abstract}
We  show  that  for any  bounded  operator $T$  acting  on  an  infinite
dimensional Banach space there exists an operator $F$ of rank at most one such that $T+F$ has an invariant subspace of infinite dimension and codimension.  We also show that whenever the boundary of the spectrum of $T$ or $T^*$ does not consist entirely of eigenvalues, we can find such rank one perturbations that have arbitrarily small norm. When this spectral condition is not satisfied, we can still find suitable finite rank perturbations of arbitrarily small norm, but not necessarily of rank one.
\end{abstract}

\maketitle

\section{Introduction}\label{intro}

The Invariant Subspace Problem is one of the most famous problem in Operator Theory, and is concerned with the search of non-trivial, closed, invariant subspaces for bounded operators acting on a separable Banach space. Considerable success has been achieved over the years both for the existence of such subspaces for many classes of operators, as well as for non-existence of invariant subspaces for particular examples of operators. However, for the most important case of a separable Hilbert space, the problem is still open. For the remaining of the paper, by ''subspace'' we mean a norm closed subspace.

 For compact operators, von Neumann (for Hilbert spaces, unpublished), and Aronszjin and Smith \cite{AS54} (for Banach spaces) showed that the Invariant Subspace problem has a positive answer.  A remarkable result of the 1970s is Lomonosov's \cite{L73} proof that every operator commuting with a compact operator has an invariant subspace, thus substantially increasing the class of operators that have invariant subspaces.  Enflo \cite{E76, E87} constructed the first examples of a Banach space and a bounded operator on it without invariant subspaces, followed by a construction by Read \cite{R84}. Later Read constructed a wealth of such examples: operators on $l_1$ \cite{R85}, strictly singular operators \cite{R91}, quasinilpotent operators\cite{R97}. All such examples are built on non-reflexive Banach spaces.  The most spectacular result of the last decade is Argyros and  Haydon \cite{AH11} construction of an infinite dimensional Banach space on which every operator is the sum of a compact operator and a scalar multiple of the identity. Therefore, in particular, every bounded operator on this space has an invariant subspace. This is the first known example of a infinite dimensional  Banach space having this property. Later, Argyros and Motakis \cite{AM14} constructed the first example of a reflexive infinite dimensional Banach space on which all bounded operators have invariant subspaces. It is still an open problem whether every infinite dimensional reflexive Banach space has this property.  We refer the reader to the monograph by Radjavi and Rosenthal \cite{RR03} for an overview  and to the book by Chalendar and Partington \cite{CP11}for more recent approaches to the Invariant Subspace Problem.

In this paper we solve in full generality a question closely related to the Invariant Subspace Problem:  \emph{given a bounded operator on a Banach space, can we always find a finite rank perturbation of it that has a ''non-trivial'' invariant subspace?}  More precisely, we prove the following theorem.
\begin{theorem} \label{mainresult}
Let $X$ be a infinite dimensional complex Banach space, and $T$ a bounded operator acting on $X$. Then there exists a bounded operator $F$ of rank at most one such that $T+F$ has a invariant subspace of infinite dimension and codimension.
\end{theorem}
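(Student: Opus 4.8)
The plan is to reduce Theorem~\ref{mainresult} to a structural statement about a single subspace, and then to produce that subspace from spectral data of $T$ or of $T^*$.

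\emph{Reduction.} It suffices to produce a closed subspace $V\subseteq X$ with $\dim V=\codim V=\infty$ for which the operator $q\circ T|_V\colon V\to X/V$ (with $q$ the quotient map) has rank at most one; equivalently, $TV\subseteq V+\CC x_0$ for a single vector $x_0$. Indeed, if this operator is zero then $V$ is already $T$-invariant and $F=0$ works; otherwise write it as $v\mapsto\phi(v)\zeta_0$ with $\phi\in V^*$ and $\zeta_0\in X/V$ nonzero, extend $\phi$ to $f_0\in X^*$ by Hahn--Banach, choose $x_0\in X$ with $q(x_0)=\zeta_0$, and set $F=-x_0\otimes f_0$, i.e.\ $Fv=-f_0(v)x_0$. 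Then for $v\in V$ we get $q\bigl((T+F)v\bigr)=\phi(v)\zeta_0-f_0(v)q(x_0)=0$, so $(T+F)V\subseteq V$, while $\operatorname{rank}F\le1$ and $\dim V=\codim V=\infty$. So the whole problem is to exhibit such a $V$.

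\emph{Spectral dichotomy.} Since $\sigma(T)=\sigma(T^*)\neq\varnothing$, fix $\lambda\in\partial\sigma(T)$; then $\lambda\in\sigma_{ap}(T)\cap\sigma_{ap}(T^*)$. If $\lambda$ (or any point of $\sigma_{ap}(T)$) is not an eigenvalue of $T$, I run the construction below for $T$; by symmetry, if some approximate eigenvalue of $T^*$ is not an eigenvalue of $T^*$, I run it for $T^*$, obtaining a weak${}^*$-closed subspace $W\subseteq X^*$ and descending via $V={}^{\perp}W$. In the remaining case every approximate eigenvalue of $T$ and of $T^*$ is an eigenvalue, so $T$ has an eigenvector and a closed invariant hyperplane; this case I dispose of directly: if $\Ker(T-\lambda)$ is infinite-dimensional, take $V$ to be an infinite-dimensional, infinite-codimensional subspace of it (then $TV\subseteq V$); if $\Ker(T^*-\lambda)$ is infinite-dimensional, then $\overline{\Range(T-\lambda)}$ is $T$-invariant of infinite codimension; and otherwise $T-\lambda$ is semi-Fredholm at the boundary point $\lambda$, $\lambda$ is isolated in $\sigma(T)$ with a finite-dimensional spectral summand, which one peels off, iterating the whole argument on the complementary summand. (This is exactly the case in which rank one is retained but the norm of $F$ need not be small.)

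\emph{The main construction.} Assume $\lambda\in\sigma_{ap}(T)$ is not an eigenvalue, and choose unit vectors $u_n$ with $\norm{(T-\lambda)u_n}\to0$. Since $\lambda$ is not an eigenvalue, no subsequence of $(u_n)$ is norm-convergent and none has a nonzero weak cluster point; passing to a subsequence (and, if necessary, to successive differences) we may assume $(u_n)$ is a basic sequence with $\sum_n\beta^{\,n}\norm{(T-\lambda)u_n}<\infty$, where $\beta$ absorbs the basis constant. Write $e_n=(T-\lambda)u_n$. The task is to select the $u_n$ --- using that $(T-\lambda)^{-1}$ is unbounded on $\Range(T-\lambda)$, so that each $u_n$ may be prescribed in essentially any direction while $e_n$ stays as small as we wish --- together with an infinite, co-infinite set $A\subseteq\NN$, so that, with $V=\overline{\Span}\{u_n:n\in A\}$, every $e_n$ with $n\in A$ lies in $V+\CC x_0$ for one fixed $x_0\notin V$. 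Granting this, for $x=\sum_{n\in A}a_nu_n\in V$ one has $Tx=\lambda x+\sum_{n\in A}a_ne_n$, hence $q(Tx)\in\CC q(x_0)$; so $q\circ T|_V$ has rank at most one, $V$ is infinite-dimensional (it contains a basic sequence) and of infinite codimension (arranged through $A$ and the choice of directions $u_n$), and the reduction finishes the proof.

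\emph{Main obstacle.} The crux is this last step: building $A$ and the single vector $x_0$ with $e_n\in V+\CC x_0$ for all $n\in A$ while keeping $\codim V=\infty$ and $x_0\notin V$. This is precisely where allowing only rank one rather than finite rank bites, and where the hypothesis that $\lambda$ is not an eigenvalue is essential, since it is what permits forcing the error vectors $e_n$ to be summably small along a basic sequence. The weak${}^*$-closedness needed for the descent from $T^*$ to $T$, and the spectral-decomposition step in the exceptional case, are the secondary points requiring care.
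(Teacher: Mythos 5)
Your opening reduction is correct (a half-space $V$ with $TV\subseteq V+\CC x_0$ is precisely one that becomes invariant after a rank-one perturbation), and you have correctly located the difficulty --- but the proposal stops exactly there. In the ``Main construction'' you write ``The task is to select the $u_n$ \dots\ Granting this \dots'', and in ``Main obstacle'' you concede that producing $A$ and a single $x_0$ with $e_n\in V+\CC x_0$ for all $n\in A$ is ``the crux.'' Nothing in the proposal actually produces such a $V$, and approximate eigenvectors are the wrong raw material for it: the error vectors $e_n=(T-\lambda)u_n$ are a priori unrelated to one another, and making them summably small does not confine them to a one-dimensional space modulo $V$. The key idea you are missing is to use \emph{resolvent} vectors of one fixed (co)vector. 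Taking $\lambda=0\in\partial\sigma(T^{*})\setminus\sigma_p(T^{*})$, $\lambda_n\to 0$ in $\rho(T^{*})$, and $h_n^{*}=(\lambda_nI-T^{*})^{-1}e^{*}$ with $\lVert h_n^{*}\rVert\to\infty$ (such an $e^{*}$ exists by uniform boundedness), the exact identity $T^{*}h_n^{*}=\lambda_nh_n^{*}-e^{*}$ makes the ``error'' literally the \emph{same} functional $e^{*}$ for every $n$; hence for every $z\in Z:=[h_n^{*}]^{\top}$ one has $h_n^{*}(Tz)=-e^{*}(z)$ for all $n$ simultaneously, and subtracting $\frac{e^{*}(z)}{e^{*}(z_0)}Tz_0$ lands back in $Z$, so the defect is exactly $[Tz_0]$. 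The hypothesis that $0$ is not an eigenvalue is used to show the normalized $x_n^{*}=h_n^{*}/\lVert h_n^{*}\rVert$ are $w^{*}$-null, after which the Johnson--Rosenthal $w^{*}$-basic-sequence theorem supplies a biorthogonal system guaranteeing $\dim Z=\codim Z=\infty$. None of this machinery appears in your proposal, and without it (or the analogous primal construction for $\partial\sigma(T)\setminus\sigma_p(T)\neq\emptyset$) the central case is unproved.

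There is also a secondary gap in your ``remaining case.'' From $\dim\Ker(T-\lambda)<\infty$ and $\dim\Ker(T^{*}-\lambda)<\infty$ you cannot conclude that $T-\lambda$ is semi-Fredholm: the range need not be closed, so $\lambda$ need not be isolated and there may be no finite-rank spectral summand to peel off; and even when peeling is possible you do not explain why the iteration terminates. The actual argument avoids this: if $\partial\sigma(T)$ is infinite and consists of eigenvalues of $T$ and $T^{*}$, eigenvectors corresponding to a suitable countable, co-infinite subset span an invariant half-space outright; if $\partial\sigma(T)$ is finite, Riesz projections reduce to $\sigma(T)=\{0\}$, and then either some orbit $\{T^{n}z\}$ is minimal (which yields the conclusion by the minimal-orbit criterion of Androulakis--Popov--Tcaciuc--Troitsky) or one finds an invariant subspace $Y$ on which $T|_{Y}$ has dense range, so that $0$ is not an eigenvalue of $(T|_{Y})^{*}$ and the resolvent construction above applies to the restriction.
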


Note that any finite dimensional or finite codimensional subspace is invariant under some suitable finite rank perturbation, thus for this question ''non-trivial'' subspace  means a subspace of infinite dimension and codimension. Such a subspace will be henceforth called a \emph{half-space}. A half-space that is invariant for some finite rank perturbation of $T$ is called \emph{almost-invariant} for $T$ (see Section \ref{def} for an equivalent definition).

 Invariant subspaces for perturbations of bounded operators have been studied for a long time, mostly in the Hilbert space setting. For example, Brown and Pearcy~\cite{BP71} proved that for any $T\in\mathcal{B}(\mathcal{H})$, where $\mathcal{H}$ is an infinite-dimensional separable Hilbert space, and for any $\varepsilon > 0$, there exists a compact operator $K$ with norm at most $\varepsilon$ such that $T+K$ has an invariant half-space. As an immediate consequence of Voiculescu's ~\cite{V76} famous non-commutative Weyl-von Neumann Theorem it follows that there exists a compact operator $K$ such that $T+K$ has a reducing half-space, that is, a half-space that is invariant for both $T+K$ and $(T+K)^*$.

An equivalent formulation of this problem was introduced by Androulakis, Popov, Tcaciuc, and Troitsky \cite{APTT09}, and in the same paper the authors proved that certain weighted shifts admit\emph{ rank one} perturbations that have invariant half-spaces. The question was solved in affirmative for reflexive Banach spaces by Popov and Tcaciuc \cite{PT13}, who showed that for every bounded operator on a reflexive Banach space, some \emph{rank one} perturbation of it has an invariant half-space. In the same paper, for the Hilbert space case,  the authors prove the existence of ''good'' perturbations that are also of small norm. This gives a substantial improvement over the result of Brown and Pearcy mentioned above, by showing that for any bounded operator $T\in\mathcal{B}(\mathcal{H})$, and for any $\veps>0$  there exists a \emph{finite rank} operator $F$ with norm at most $\varepsilon$ such that $T+F$ has an invariant half-space. Moreover, when either the boundary of $T$ or $T^*$ does not consist entirely of eigenvalues, $F$ can be taken to be \emph{rank one}. In \cite{TW17} this result was extended to the reflexive case.

 Partial solutions for general Banach spaces were given by Sirotkin and Wallis, first for weakly-compact operators and for quasinilpotent operators in \cite{SW14}, then for strictly singular operators in \cite{SW16}. In the latter paper they also showed that any bounded operator acting on a Banach space admits a \emph{compact} perturbation that has an invariant half-space. Common almost-invariant half-spaces for algebras of operators have been studied in \cite{P10}, \cite{MPR13}, and \cite{SW16}. In \cite{MPR13} the authors show that whenever a norm closed algebra of operators on a Hilbert space admits a common almost-invariant half-space, then it actually admits a common \emph{invariant} half-space. This result was extended in \cite{SW16} to norm closed algebras of operators on Banach spaces.

 In Section \ref{main} of this paper we solve the problem in full generality.  In Section \ref{section small norm} we refine the method to obtain, under certain spectral assumptions, perturbations small in norm.

\section{Definitions and Preliminaries} \label{def}

For a Banach space $X$, we denote by $\mathcal{B}(X)$ the algebra of all (bounded linear) operators on $X$. When  $T\in{\mathcal B}(X)$, we write $\sigma(T)$, $\sigma_p(T)$,$\sigma_{ess}(T)$,  $\rho(T)$ and $\partial\sigma(T)$ for the spectrum
of~$T$,  point spectrum of $T$, the essential point spectrum of $T$,  the resolvent set of~$T$ and the topological boundary of the spectrum, respectively. The closed span of a set  $\{x_n\}_n$ of vectors in $X$ is denoted by $[x_n]$. A sequence $(x_n)_{n=1}^{\infty}$ in $X$ is called a \emph{basic sequence} if any $x\in[x_n]$ can be written uniquely as $x=\sum_{n=1}^{\infty} a_n x_n$, where the convergence is in norm (see \cite[section 1.a]{LT77} for background on Schauder bases and basic sequences). If $W$ is a subset of $X^*$, the dual of $X$, then the \emph{pre-annihilator} of $W$ in $X$, denoted by $W^\top$, is defined as:

$$
W^{\top}:=\{x\in X : f(x)=0\mbox{ for all } f\in W\}
$$

It is straightforward to verify that $W^{\top}$ is a closed subspace of $X$. The following definition was introduced in \cite{APTT09}, towards an equivalent formulation of the question under consideration.

\begin{definition}
If $T\in\mathcal{B}(X)$ and $Y$ is a subspace of $X$, we say that $Y$ is \emph{almost-invariant} for $T$ if there exists a finite dimensional subspace $E$ of $X$ such that $TY\subseteq Y+E$. The smallest dimension of such an $E$ is called the \emph{defect} of $Y$ for $T$
\end{definition}

It was proved in \cite{APTT09} (see Proposition 1.3) that  a half-space $Y$ is almost-invariant with defect $k$ for a bounded operator $T$, if and only if there exists a rank $k$ operator $F$ such that $Y$ is invariant for $T+F$, in other words the equivalency we mentioned before.

 Note that when $X$ is not separable, any bounded operator $T$ will have an invariant half-space. Indeed, it is sufficient to consider $Y$ the closed span of the set $\{T^n(x_k): n,k\in\mathbb{N}\}$, where $(x_n)_{n=1}^{\infty}$ is a linearly independent sequence in $X$. Clearly $Y$ is infinite dimensional $T$-invariant subspace of $X$, and since it is separable (while $X$ is not), it is also a half-space. Therefore, for the rest of the paper we may only consider separable Banach spaces.

The main result of \cite{APTT09} was the following theorem, which was used in that paper to prove the existence of almost-invariant half-spaces for certain classes of weighted shifts, and it will also be important here. Recall that a sequence $(x_n)$ in a Banach space is called \emph{minimal} if, for every~$k\in\mathbb N$, $x_k$ does not belong to the closed linear span of the set $\{x_n\mid n\neq k\}$ (see also \cite[Section~1.f]{LT77}).

\begin{theorem}\cite[Theorem 3.2]{APTT09}, \cite[Remark 1.3]{MPR12}\label{oldpaper}
  Let $X$ be a Banach space and $T \in {\mathcal B}(X)$ satisfying the
  following conditions:
  \begin{enumerate}
  \item\label{rho} The unbounded component of the resolvent set $\rho(T)$ contains
      $\{z\in\mathbb C\mid 0<\abs{z}<\varepsilon\}$ for some $\varepsilon>0$.
  \item There is a vector $e\in X$ whose orbit $\{T^{n}e\}_{n=0}^{\infty}$ is a minimal sequence.
  \end{enumerate}
  Then $T$ has an almost-invariant half-space with defect at most one.
\end{theorem}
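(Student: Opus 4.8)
The plan is to manufacture the almost-invariant half-space out of the analytic family of resolvent vectors attached to $e$.

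\emph{Reduction.} A minimal sequence is linearly independent, so $Z:=[T^{n}e]$, the closed linear span of the orbit of $e$, is infinite dimensional. If $\codim_{X}Z=\infty$ then $Z$ is itself a $T$-invariant half-space and we are done. Assume therefore $\codim_{X}Z<\infty$; then $Z$ is a closed, finite-codimensional, $T$-invariant subspace, and, writing $\overline{T}$ for the operator induced on the finite-dimensional quotient $X/Z$, one has $\sigma(T|_{Z})\subseteq\sigma(T)\cup\sigma(\overline{T})$ with $\sigma(\overline{T})$ finite, so a small punctured disc about $0$ still lies in the unbounded component of $\rho(T|_{Z})$; also the orbit of $e$ is still minimal in $Z$. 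Hence $T|_{Z}$ satisfies (\ref{rho}) and (ii), and since an almost-invariant half-space of $Z$ for $T|_{Z}$ of defect at most one is also one of $X$ for $T$ (the witnessing finite-dimensional subspace sits in $Z\subseteq X$, and the codimension in $X$ is at least that in $Z$), we may assume henceforth that $X=[T^{n}e]$; then $(T^{n}e)_{n\ge0}$ is complete and minimal and carries a biorthogonal sequence $(g_{n})\subseteq X^{*}$ with $g_{n}(T^{m}e)=\delta_{nm}$.

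\emph{Resolvent vectors.} Put $\psi_{\lambda}:=(I-\lambda T)^{-1}e$. This is analytic in $\lambda$ on the connected open set $V$ consisting of $0$ together with all $\lambda\neq0$ for which $\lambda^{-1}$ lies in the unbounded component of $\rho(T)$; by (\ref{rho}), $V$ contains the disc $\{|\lambda|<\|T\|^{-1}\}$ and the punctured neighbourhood $\{|\lambda|>\varepsilon^{-1}\}$ of $\infty$. On the disc the Neumann series gives $\psi_{\lambda}=\sum_{n\ge0}\lambda^{n}T^{n}e$, so $g_{n}(\psi_{\lambda})=\lambda^{n}$ there, and by analytic continuation this holds on all of $V$. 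Multiplying through by $(I-\lambda T)$ yields the key relation $T\psi_{\lambda}=\lambda^{-1}(\psi_{\lambda}-e)$ for $\lambda\in V\setminus\{0\}$. Hence, for \emph{any} sequence $(\lambda_{k})$ of distinct points of $V\setminus\{0\}$, the subspace $Y:=[\psi_{\lambda_{k}}]$ satisfies $TY\subseteq Y+\CC e$, i.e.\ $Y$ is almost-invariant for $T$ with defect at most one; moreover $Y$ is infinite dimensional, since a finite relation $\sum_{k}c_{k}\psi_{\lambda_{k}}=0$ gives, on applying the $g_{n}$, $\sum_{k}c_{k}\lambda_{k}^{n}=0$ for all $n$, whence all $c_{k}=0$ by a Vandermonde determinant.

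\emph{The main point.} What remains is to choose $(\lambda_{k})$ so that $Y$ also has infinite \emph{co}dimension, and this I expect to be the genuine obstacle. The natural move is to let $|\lambda_{k}|\to\infty$: by (\ref{rho}) these points stay inside $V$, and, having no accumulation point in the finite part of $V$, they do not force $Y=X$ via analytic continuation. Now $f\in Y^{\perp}$ precisely when the scalar analytic function $F_{f}(\lambda):=f(\psi_{\lambda})$, whose Taylor coefficients at $0$ are the numbers $f(T^{n}e)$, vanishes at every $\lambda_{k}$; so the task is to produce an infinite linearly independent family in $X^{*}$ whose symbols $F_{f}$ all share the infinite zero set $\{\lambda_{k}\}$. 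I would choose the $\lambda_{k}$ growing fast enough that the interpolation conditions ``$F_{f}(\lambda_{k})=0$'' become mild, using that $X^{*}$ is infinite dimensional together with the minimality of $(T^{n}e)$---which identifies $\{F_{f}\mid f\in X^{*}\}$ with a concrete space of functions analytic on $V$ that contains all polynomials. Were this impossible, every symbol would be forced to be essentially rational at $\infty$, putting one in a rigid situation at $0$ (morally, $0$ is a pole of the resolvent, so $X\simeq\Ker T^{k}\oplus\Range T^{k}$ for some $k$); one then finishes either by exhibiting an invariant half-space directly among $\Ker T^{k}$ and $\Range T^{k}$, or by passing to the invertible part $T|_{\Range T^{k}}$ and repeating the construction there with $(\lambda_{k})$ converging to a non-removable boundary point of the corresponding domain.
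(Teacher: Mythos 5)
Your preparatory work is correct and is essentially the standard opening of the argument for this theorem: the reduction to $X=[T^ne]$, the resolvent vectors $\psi_\lambda=(I-\lambda T)^{-1}e$, the continuation of $g_n(\psi_\lambda)=\lambda^n$ from the Neumann disc to all of $V$, the identity $T\psi_\lambda=\lambda^{-1}(\psi_\lambda-e)$ (which gives defect at most one for \emph{any} subfamily), and the Vandermonde argument for infinite dimensionality. But the proof stops exactly where the theorem becomes nontrivial: you never show that some choice of $(\lambda_k)$ makes $Y=[\psi_{\lambda_k}]$ infinite codimensional, and your last paragraph concedes this (``this I expect to be the genuine obstacle'', ``I would choose\dots''). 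Letting $|\lambda_k|\to\infty$ and hoping the interpolation conditions $F_f(\lambda_k)=0$ become ``mild'' is not an argument --- nothing you have written rules out that $[\psi_{\lambda_k}]$ has finite codimension for every such choice --- and the fallback dichotomy (``every symbol essentially rational at $\infty$'', hence ``$0$ is a pole of the resolvent'') does not follow from the failure of the interpolation scheme and is not developed into a proof. So there is a genuine gap, and it sits precisely at the step for which the theorem is cited jointly from \cite{APTT09} and \cite[Remark 1.3]{MPR12}.

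The mechanism that actually closes this gap is not duality/interpolation but basic sequences, and it is the same template the present paper uses in the dual setting in Theorem \ref{newmain}. Normalize $u_k:=\psi_{\lambda_k}/\norm{\psi_{\lambda_k}}$; since $|\lambda_k|^n=|g_n(\psi_{\lambda_k})|\le\norm{g_n}\,\norm{\psi_{\lambda_k}}$ for every $n$, the norms $\norm{\psi_{\lambda_k}}$ grow faster than any power of $|\lambda_k|$, whence $g_n(u_k)\to 0$ for each fixed $n$. After passing to a subsequence one arranges that $(u_k)$ is a basic sequence (via a Bessaga--Pe{\l}czy{\'n}ski-type selection; making the weak-null/selection step rigorous is itself delicate and is the content of the cited remark --- note that the $g_n$ need not be norming, so ``$g_n(u_k)\to0$ for all $n$'' does not immediately give weak nullity). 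Once $(u_{k_j})$ is basic, keep only every other term: the closed span of $(u_{k_{2j}})$ has infinite codimension because the odd-indexed coefficient functionals, extended by Hahn--Banach, give infinitely many independent functionals annihilating it, while $TY\subseteq Y+\CC e$ persists because each $\psi_\lambda$ individually satisfies $T\psi_\lambda\in\CC\psi_\lambda+\CC e$. Without this (or an equivalent) device for producing infinite codimension, your proposal does not prove the theorem.
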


As we mentioned in the Introduction, the almost-invariant half-space problem was solved for reflexive Banach spaces by Popov and Tcaciuc in \cite{PT13}. An important step was the following theorem which proves the existence of almost-invariant half-spaces provided a certain spectral condition holds, and this result will also feature in our proof of the general case:

\begin{theorem}\cite[Theorem 2.3]{PT13} \label{oldmain}
 Let $X$ be an infinite dimensional Banach space and let  $T \in
{\mathcal B}(X)$ such that there exists $\mu\in\partial\sigma(T)$ that is not an eigenvalue. Then $T$ admits an almost-invariant half-space with defect one.
\end{theorem}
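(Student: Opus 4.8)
The plan is to move the problem to the origin and then reduce it to the construction of a single, carefully chosen basic sequence. First I would replace $T$ by $T-\mu I$; this changes neither the lattice of invariant subspaces nor the rank of any perturbation, so I may assume $\mu=0$, that is, $0\in\partial\sigma(T)$ and $T$ is injective. A boundary point of the spectrum is always an approximate eigenvalue: choosing $\lambda_n\in\rho(T)$ with $\lambda_n\to 0$ one has $\norm{(\lambda_n-T)^{-1}}\to\infty$ (otherwise a ball of fixed radius about some $\lambda_n$ would lie in $\rho(T)$, forcing $0\in\rho(T)$), and normalising $(\lambda_n-T)^{-1}y_n$ for suitably chosen unit vectors $y_n$ yields unit vectors $x_n$ with $\norm{Tx_n}\to 0$. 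Thus $T$ is injective but not bounded below; equivalently, there are unit vectors $v\in\Range T$ with $\norm{T^{-1}v}$ arbitrarily large, which is the feature that will drive the construction below. If at any stage $T$ is found to have an invariant half-space we are done with $F=0$, so I assume throughout that it does not.

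The key reformulation is that it suffices to produce a normalised basic sequence $(x_n)_{n\ge 0}$, with basis constant at most $2$, such that
\begin{enumerate}
\item $Tx_n\in Y_0:=[x_m:m\ge 0]$ for every $n\ge 1$;
\item $\sum_{n\ge 1}\norm{Tx_n}<\infty$;
\item $Y_0$ has infinite codimension in $X$.
\end{enumerate}
Indeed, put $Y:=[x_m:m\ge 1]$: it is infinite dimensional, being spanned by a basic sequence, and infinite codimensional, since $Y\subseteq Y_0$, so it is a half-space, and $Y_0=\CC x_0\oplus Y$. By (i) one may write $Tx_n=a_nx_0+y_n$ with $y_n\in Y$, where $a_n$ is the $x_0$-coordinate of $Tx_n$ in the Schauder basis $(x_m)_{m\ge 0}$ of $Y_0$, so $\abs{a_n}$ is bounded by a fixed multiple of $\norm{Tx_n}$ and, by (ii), $\sum_n\abs{a_n}<\infty$. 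Letting $x_n^*$ be the coordinate functionals of $(x_m)_{m\ge 1}$, extended to $X$ by Hahn--Banach, the functional $\varphi:=\sum_{n\ge 1}a_nx_n^*$ is a well-defined element of $X^*$ with $\varphi(x_n)=a_n$ for $n\ge 1$. The operator $F$ of rank at most one given by $Fx:=-\varphi(x)x_0$ then satisfies $(T+F)x_n=y_n\in Y$ for all $n\ge 1$, hence $(T+F)Y\subseteq Y$, and $Y$ is almost-invariant for $T$ with defect at most one.

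The construction of $(x_n)_{n\ge 0}$ is the substance of the argument, and the natural device is to iterate the unbounded inverse $T^{-1}$, producing a near-chain $Tx_n\approx c_nx_{n-1}$; along such a chain condition~(i) holds automatically, and in fact $TY\subseteq Y+\CC x_0$ at once, only $Tx_1$ escaping $Y$. Pulling back indefinitely requires $x_0\in\bigcap_{k\ge 1}\Range(T^k)$, which leads me to examine the closed $T$-invariant subspaces $N:=\overline{\Range T}$ and $N_\infty:=\overline{\bigcap_{k}\Range(T^k)}$. An injective operator on an infinite-dimensional space has infinite rank, so $N$ is infinite dimensional; and since $T$ has no invariant half-space, $N$ --- and likewise $N_\infty$ whenever it is infinite dimensional --- must then be of finite codimension. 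In the principal case, where $N_\infty$ is infinite dimensional (hence finite codimensional) and $T$ restricts to it as an injective operator with dense range, I would run the pull-back construction inside $N_\infty$, using a gliding-hump argument to perturb the successive preimages slightly so that the closed span $Y_0$ stays infinite codimensional in $N_\infty$, and therefore in $X$; the residual case, $N_\infty$ finite dimensional (shift-like behaviour), I would route through Theorem~\ref{oldpaper}.

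The step I expect to be the main obstacle is exactly condition~(iii): the pull-back recipe pins down the direction of each new vector up to a small perturbation, leaving very little freedom to prevent the closed span from exhausting the space, and it is here --- together with the bookkeeping required to keep $(x_n)$ basic and $\sum_n\norm{Tx_n}$ finite --- that the standing hypothesis that $T$ has no invariant half-space must genuinely be used. For the shift-like and, more broadly, the isolated-spectrum cases the following remark helps: if $0$ is an isolated point of $\sigma(T)$, the Riesz projection $P$ at $0$ commutes with $T$ and has infinite-dimensional range (otherwise $T|_{PX}$ would be a nonzero nilpotent, so $0\in\sigma_p(T)$), hence $S:=T|_{PX}$ is an injective quasinilpotent operator on the infinite-dimensional space $PX$; then $\rho(S)=\CC\setminus\{0\}$ satisfies the first hypothesis of Theorem~\ref{oldpaper}, the orbit of every nonzero vector under $S$ is linearly independent (any annihilating polynomial, once the highest power of $z$ dividing it is removed, has nonzero constant term and so is invertible at $S$), and --- modulo upgrading this to a vector with minimal orbit --- Theorem~\ref{oldpaper} produces an almost-invariant half-space of $PX$, which is a half-space of $X$, almost-invariant for $T$, with defect at most one. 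It is the remaining, non-isolated situation --- say when a nondegenerate arc of $\sigma(T)$ terminates at $0$ --- in which the direct construction above is indispensable.
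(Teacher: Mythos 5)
Your reduction to the construction of a basic sequence satisfying (i)--(iii), and the rank-one perturbation you build from such a sequence, are both correct; but the construction itself --- which is the entire content of the theorem --- is never carried out. You say so yourself: condition (iii) is flagged as ``the main obstacle,'' the gliding-hump perturbation of the successive preimages is only announced as an intention, and the case where $0$ is not isolated in $\sigma(T)$ while $\overline{\bigcap_k\Range(T^k)}$ is finite dimensional is left with no argument at all. Theorem \ref{oldpaper} is only available to you when its resolvent hypothesis holds, essentially when $0$ is isolated; and even there the passage from ``linearly independent orbit'' to ``minimal orbit'' is a genuine issue rather than a formality --- it is exactly the dichotomy that the proof of Theorem \ref{general} has to work around. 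The pull-back chain $Tx_n\approx c_nx_{n-1}$ also has an intrinsic defect you only partially acknowledge: it needs $x_0\in\Range(T^k)$ for every $k$, and for an injective $T$ with $0\in\partial\sigma(T)$ that intersection of ranges can be trivial.

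The idea you are missing is that the approximate eigenvectors you set aside in your first paragraph already do the whole job, with a \emph{fixed} one-dimensional defect. Choose $\lambda_n\in\rho(T)$ with $\lambda_n\to0$; since $\norm{(\lambda_nI-T)^{-1}}\to\infty$, the uniform boundedness principle gives a single vector $e$ with $\norm{h_n}\to\infty$, where $h_n:=(\lambda_nI-T)^{-1}e$, and then $Th_n=\lambda_nh_n-e$. Hence $x_n:=h_n/\norm{h_n}$ satisfies $Tx_n=\lambda_nx_n-e/\norm{h_n}$: every $Tx_n$ lies in $[x_n]+[e]$ with the \emph{same} vector $e$, so the closed span of any subsequence is almost-invariant with defect at most $[e]$, and no chain of preimages is needed. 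The only remaining work is to make that span a half-space: no subsequence of $(x_n)$ can converge weakly to a nonzero limit (such a limit would be an eigenvector of $T$ for $0$), so a basic subsequence can be extracted by the Bessaga--Pelczynski/Kadec--Pelczynski selection principle, and the closed span of every other term of a basic sequence automatically has infinite dimension and infinite codimension in $X$. This is precisely the structure of the proof of Theorem \ref{newmain} in the dual setting; your route replaces a short verification of almost-invariance by a hard, and here unfinished, construction.
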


\section{Every bounded operator has an almost-invariant half-space}\label{main}

An important ingredient in our proof is the following  $w^*$-analogue of the Bessaga-Pelczynski selection principle. An outline of the proof first appeared in a paper by Johnson and Rosenthal (see Theorem III.1 and Remark III.1 in \cite{JR72}). For a shorter proof see the recent paper of Gonz\'{a}lez and Martinez-Abej\'{o}n \cite{GM12}. Recall that a sequence $(x_n)$ in a Banach space is called \emph{semi-normalized} if $0<\inf\|x_n\|\leq\sup\|x_n\|<\infty$.

\begin{theorem}\cite{JR72}\cite{GM12} \label{JR}
  If $(x^*_n)$ is a semi-normalized, $w^*$-null, sequence in a dual Banach space $X^{*}$, then there exists a basic subsequence $(y^*_n)$ of $(x^*_n)$, and a bounded sequence $(y_n)$ in $X$ such that $y^*_i(y_j)=\delta_{ij}$ for all $1\leq i,j<\infty$.
\end{theorem}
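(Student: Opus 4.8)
The plan is to build the subsequence $(y^*_n)$ and the biorthogonal vectors $(y_n)$ by a single interlocked induction — using the $w^*$-nullity to push each newly selected functional far out — and then to finish with an elementary triangular-inversion trick that upgrades an approximate biorthogonality into an exact one. Set $\delta=\inf_n\norm{x^*_n}>0$ and $M=\sup_n\norm{x^*_n}<\infty$. Throughout I use the elementary fact that a finite-dimensional subspace $F\subseteq X^{*}$ is almost isometrically normed by a finite-dimensional subspace of $X$: for each $\theta\in(0,1)$ the unit sphere of $F$ is compact, so finitely many unit vectors of $X$, one nearly norming each point of a sufficiently fine net in that sphere, span a finite-dimensional $G\subseteq X$ with $\sup\{\abs{f(z)}:z\in G,\ \norm z\le1\}\ge\theta\norm f$ for every $f\in F$; we then say $G$ \emph{$\theta$-norms} $F$.

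Fix a rapidly decreasing sequence $\varepsilon_k\downarrow0$, chosen in advance (depending only on $\delta$) small enough for $(\mathrm a)$ and $(\mathrm b)$ below. Suppose $y^*_1,\dots,y^*_{k-1}$ (terms of $(x^*_n)$ of strictly increasing index), vectors $y_1,\dots,y_{k-1}\in X$, and a finite-dimensional $G_{k-1}\supseteq\Span\{y_1,\dots,y_{k-1}\}$ which $(1-\varepsilon_k)$-norms $\Span\{y^*_1,\dots,y^*_{k-1}\}$ have been chosen. As $G_{k-1}$ is finite-dimensional, $w^*$-nullity gives $\sup\{\abs{x^*_m(z)}:z\in G_{k-1},\ \norm z\le1\}\to0$, so I take $y^*_k=x^*_m$ with $m$ large enough that $\abs{y^*_k(z)}\le\varepsilon_k$ for all $z$ in the unit ball of $G_{k-1}$. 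The smallness of $\varepsilon_k$ then yields $(\mathrm a)$: for every $f\in\Span\{y^*_1,\dots,y^*_{k-1}\}$ and every scalar $t$, $\norm f\le(1+\eta_k)\norm{f+ty^*_k}$, where $\sum_k\eta_k<\infty$; consequently $(y^*_n)$ is a basic sequence with basis constant below $2$, and $\dist\bigl(y^*_k,\Span\{y^*_1,\dots,y^*_{k-1}\}\bigr)\ge\delta/4$. It also yields $(\mathrm b)$: $\abs{y^*_k(y_j)}\le2^{-k}$ for all $j<k$. Since the displayed distance equals the norm of the restriction of $y^*_k$ to the finite-codimensional subspace $W_k:=\bigcap_{i<k}\Ker y^*_i$ (because $\norm{g|_W}=\dist(g,W^{\perp})$ and a finite-dimensional subspace of $X^*$ is $w^*$-closed), I can choose $y_k\in W_k$ with $y^*_k(y_k)=1$ and $\norm{y_k}\le 8/\delta$. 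Finally, invoking the norming fact, I enlarge $G_{k-1}$ to a finite-dimensional $G_k\supseteq G_{k-1}+\Span\{y_k\}$ which $(1-\varepsilon_{k+1})$-norms $\Span\{y^*_1,\dots,y^*_k\}$, and proceed. The construction delivers a basic subsequence $(y^*_n)$ of $(x^*_n)$, a bounded sequence $(y_n)$ in $X$, and a matrix $A=[\,y^*_i(y_j)\,]_{i,j\ge1}$ which is lower triangular with $A_{ii}=1$ and $\abs{A_{ij}}\le2^{-i}$ for $i>j$.

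To remove the below-diagonal entries, write $A=I+N$ with $N$ strictly lower triangular; its column sums satisfy $\sum_i\abs{N_{ij}}\le\sum_{i>j}2^{-i}=2^{-j}\le1/2$, so $N$ is a contraction on $\ell_1$ and $A^{-1}=\sum_{k\ge0}(-N)^k=I+M$ is again lower triangular with $\sup_j\sum_i\abs{(A^{-1})_{ij}}\le2$. Put $\tilde y_j:=\sum_l(A^{-1})_{lj}\,y_l$; this series converges absolutely in $X$ since $\sum_l\abs{(A^{-1})_{lj}}<\infty$ while $\norm{y_l}\le8/\delta$, and $\norm{\tilde y_j}\le16/\delta$, so $(\tilde y_j)$ is bounded. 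For all $i,j$, the continuity of $y^*_i$ together with triangularity — which reduces the sum to finitely many terms — gives $y^*_i(\tilde y_j)=\sum_l A_{il}(A^{-1})_{lj}=(AA^{-1})_{ij}=\delta_{ij}$. Hence $(y^*_n)$ and $(\tilde y_n)$ satisfy the conclusion.

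The main obstacle is the coordination inside the induction. Forcing $y_k$ to lie in $\bigcap_{i<k}\Ker y^*_i$ is precisely what makes the above-diagonal part of $A$ vanish \emph{identically}, but one must simultaneously keep $\norm{y_k}$ bounded, and this requires a uniform lower bound on $\dist\bigl(y^*_k,\Span\{y^*_1,\dots,y^*_{k-1}\}\bigr)$ — hence on the basis constant of the very subsequence being extracted. Interlocking the basic-sequence selection with the biorthogonalization — through the finite-dimensional norming fact and a sufficiently rapidly decreasing error sequence $(\varepsilon_k)$ — is the delicate point; the concluding triangular inversion is then routine.
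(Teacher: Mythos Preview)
The paper does not give its own proof of this statement; it is quoted as a known result, with the reader referred to \cite{JR72} for an outline and to \cite{GM12} for a short proof. Your argument is correct and follows precisely the route of those references: a Mazur--type selection (using that each finite-dimensional subspace of $X^*$ is $\theta$-normed by some finite-dimensional subspace of $X$, together with $w^*$-nullity to make the next functional uniformly small on that norming subspace) extracts the basic subsequence, while the identity $\bigl\lVert y_k^*|_{W_k}\bigr\rVert=\dist\bigl(y_k^*,\Span\{y_1^*,\dots,y_{k-1}^*\}\bigr)$ supplies the uniform lower bound needed to choose the biorthogonal vectors with bounded norm. The concluding triangular--matrix inversion in $\ell_1$ to pass from approximate to exact biorthogonality is the standard finishing step. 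One small expository point: in (a) the inequality $\norm{f}\le(1+\eta_k)\norm{f+ty_k^*}$ for \emph{all} scalars $t$ is not immediate from the norming estimate alone; it needs the usual two-case split ($\abs{t}\le 2/\delta$ handled via $G_{k-1}$, and $\abs{t}>2/\delta$ via $\norm{y_k^*}\ge\delta$), after which the distance bound $\delta/4$ and the basis-constant bound follow at once. With that line added, the proof is complete and matches the literature.
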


We begin by proving an essential step for the general case, step that deals with the situation when $T^*$ satisfies a spectral condition similar to the one in the hypothesis of Theorem \ref{oldmain}.

\begin{theorem} \label{newmain}
Let $X$ be a separable Banach space and $T\in\mathcal{B}(X)$ a bounded operator such that $\partial\sigma(T^{*})\setminus\sigma_p(T^{*})\neq\emptyset$. Then $T$ has an almost-invariant half-space with defect at most one.
\end{theorem}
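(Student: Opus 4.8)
The plan is to reduce to Theorem~\ref{oldpaper} by producing, from the spectral hypothesis on $T^*$, a vector $e \in X$ whose orbit $\{T^n e\}_{n=0}^\infty$ is minimal, after first arranging the resolvent condition~\eqref{rho} by a translation. Pick $\mu \in \partial\sigma(T^*)\setminus\sigma_p(T^*)$. Since $\sigma(T^*)=\sigma(T)$, replacing $T$ by $T-\mu I$ (which only translates the spectrum and changes the sought perturbation not at all in rank), we may assume $0 \in \partial\sigma(T^*)$ and $0 \notin \sigma_p(T^*)$, i.e.\ $T^*$ is injective. Because $0$ lies on the boundary of the spectrum, $0$ is in the closure of the resolvent set, and a standard fact about boundary points of the spectrum gives a sequence $\lambda_n \to 0$ in the unbounded component of $\rho(T)$; by passing to a smaller disk we can ask that the punctured disk $\{0<|z|<\veps\}$ meets this component. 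The genuinely useful consequence of $0 \in \partial\sigma(T)$ is that $T$ (equivalently $T^*$) is not bounded below: there is a semi-normalized sequence $(x_n)$ in $X$ with $\|T x_n\| \to 0$, or dually a semi-normalized sequence witnessing that $T^*$ has no bounded inverse on its range.

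The heart of the matter is constructing the minimal orbit, and this is where Theorem~\ref{JR} enters. The hypothesis that $T^*$ is injective but $0 \in \partial\sigma(T^*)$ should be leveraged to produce a semi-normalized, $w^*$-null sequence in $X^*$ — for instance, normalized approximate eigenvectors $(\phi_n)$ for $T^*$ at $0$, which are $w^*$-null (any $w^*$-cluster point $\phi$ would satisfy $T^*\phi = 0$ by a weak$^*$ continuity argument on $\phi_n - T^*(\text{something})$, forcing $\phi = 0$ by injectivity; semi-normalization prevents norm-convergence to $0$, so after extracting one gets a genuinely $w^*$-null semi-normalized subsequence). Apply Theorem~\ref{JR} to get a basic subsequence $(y_n^*)$ and a bounded biorthogonal system $(y_n)$ in $X$ with $y_i^*(y_j) = \delta_{ij}$. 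The idea is then to build $e$ as a rapidly convergent series $e = \sum_k a_k y_{n_k}$ along a sparse subsequence, choosing the scalars $a_k$ to decay fast and the indices $n_k$ to grow fast, so that for each fixed $m$ one can exhibit a functional annihilating $\{T^n e : n \neq m\}$ but not $T^m e$ — using the $y_{n_k}^*$ to read off coordinates and controlling the ``spillover'' terms $T^n(\text{tail})$ by the rapid decay. Separability of $X$ lets us also ensure the orbit stays linearly independent.

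The main obstacle I anticipate is precisely the interaction between the operator $T$ and the biorthogonal system: the functionals $y_n^*$ are biorthogonal to the $y_n$, but not to the vectors $T^n e$, and applying $T$ mixes coordinates in an uncontrolled way. To handle this I would not expect a single clean biorthogonal family to suffice; instead the natural route is an inductive ``gliding hump'' construction, at each stage choosing the next index $n_{k+1}$ large enough and the next coefficient $a_{k+1}$ small enough that the new term is nearly invisible to the finitely many already-committed test functionals applied to $T^0 e, \dots, T^k e$, while remaining detectable in its own slot. One should also confirm that $[T^n e]$ has infinite codimension — this follows because the orbit, being minimal, is in particular linearly independent and its closed span, together with a suitable complemented-type estimate coming from the biorthogonal functionals, cannot be all of $X$; if a direct argument is awkward, one can instead invoke that an almost-invariant subspace produced by Theorem~\ref{oldpaper} with defect at most one is automatically a half-space by the standing conventions, or pass to $T$ restricted to a complemented infinite-codimensional subspace from the outset. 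With the minimal orbit and the resolvent condition in hand, Theorem~\ref{oldpaper} delivers an almost-invariant half-space of defect at most one, completing the proof.
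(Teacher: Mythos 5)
Your proposal contains a genuine gap — in fact two — and the route through Theorem~\ref{oldpaper} is not the one the paper takes, for good reason. First, condition~(\ref{rho}) of Theorem~\ref{oldpaper} requires that a full punctured disk $\{0<\abs{z}<\veps\}$ be \emph{contained in} the unbounded component of $\rho(T)$; the hypothesis $0\in\partial\sigma(T^*)=\partial\sigma(T)$ only gives that $0$ is a limit of resolvent points, which may lie in bounded components and certainly need not fill a punctured disk (the spectrum can cluster at $0$ from all directions). Your phrase ``the punctured disk meets this component'' is strictly weaker than what the theorem demands, and the gap cannot be repaired in general. Second, and more seriously, the heart of your argument — building a vector $e$ whose orbit $\{T^ne\}$ is minimal via a gliding-hump series $e=\sum_k a_k y_{n_k}$ — is not carried out; you explicitly name the obstruction (applying $T$ mixes the biorthogonal coordinates uncontrollably) without resolving it. There is no reason such a vector exists under the stated hypotheses: indeed, in the proof of Theorem~\ref{general} the author confronts exactly the dichotomy ``either some orbit is minimal, or $T$ restricted to some invariant subspace has dense range,'' and it is precisely in the second case — where no minimal orbit is available — that Theorem~\ref{newmain} is invoked. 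So your reduction would make the logic of the paper circular and, on its own terms, rests on an unproved (and likely false in general) existence claim.

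The paper's actual proof avoids orbits entirely. It takes $\lambda_n\to 0$ in $\rho(T^*)$, uses the uniform boundedness principle to find $e^*$ with $\norm{h_n^*}\to\infty$ where $h_n^*=(\lambda_nI-T^*)^{-1}e^*$, normalizes to get $x_n^*$, shows a subsequence is $w^*$-null (your approximate-eigenvector argument for this step is essentially right), and applies Theorem~\ref{JR} to make $(x_n^*)$ basic with a bounded biorthogonal system — but only in order to conclude that $Z:=[x_n^*]^{\top}$ is a \emph{half-space}. Almost-invariance of $Z$ then falls out of the algebraic identity $T^*h_n^*=\lambda_n h_n^*-e^*$: for $z\in Z$ one gets $h_n^*(Tz)=-e^*(z)$ for every $n$, so either $Z\subseteq\ker e^*$ and $Z$ is invariant, or $TZ\subseteq Z+[Tz_0]$ for any $z_0\in Z$ with $e^*(z_0)\neq 0$. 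You have the right ingredients (approximate point spectrum at a boundary non-eigenvalue, $w^*$-nullity, Theorem~\ref{JR}), but you should use them to build the half-space directly as a pre-annihilator rather than to feed Theorem~\ref{oldpaper}.
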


\begin{proof}

  Let $\lambda\in\partial\sigma(T^{*})\setminus\sigma_p(T^{*})$ and without loss of generality assume $\lambda=0$, otherwise work with $T-\lambda I$. Let $(\lambda_n)$ be a sequence in the resolvent $\rho(T^*)$ such that $\lambda_n\to 0$. Then we have that $\norm{(\lambda_nI-T^{*})^{-1}}\to\infty$ and, from Uniform boundness principle, it follows that there exists $e^{*}\in X^*$ such that $\norm{(\lambda_nI-T^{*})^{-1}e^*}\to\infty$. Put $h_n^*:=(\lambda_nI-T^{*})^{-1}e^*$ and $x_n^*:=h_n^*/\norm{h_n^*}$. Easy calculations show that
  \begin{equation}\label{inveq}
   T^*x_n^*=\lambda_n x_n^*-\frac{e^*}{\norm{h_n^*}}
  \end{equation}

  Claim 1: $(x_n^*)$ has a subsequence that is $w^*$-null.

   From Banach-Alaoglu we have that $B_{X^*}$, the unit ball of $X^{*}$, is $w^{*}$-compact, and since $X$ is separable, $B_{X^*}$ is also $w^*$-metrizable. Therefore, by passing to a subsequence, we can assume that $x_n^*\tow y^*$ for some $y^*\in X^*$. Remains to show that $y^*=0$. Since $\lambda_n\to 0$,  $x_n^*\tow y^*$, and $\norm{h_n^*}\to\infty$ we have that

\begin{equation}\label{conv}
  T^*x_n^*\tow T^*y^*  \mbox{ and } \lambda_n x_n^*-\frac{e^*}{\norm{h_n^*}}\tow 0
\end{equation}

From $(\ref{inveq})$ and $(\ref{conv})$ it follows that $ T^*y^*=0$. However $0$ is not an eigenvalue, so we must have that $y^{*}=0$ and the Claim 1 is proved.

Claim 2: $(x_n^*)$ has a subsequence $(x_{n_k}^*)$ such that $[x_{n_k}^*]^{\top}$ is a half-space of $X$.

  By passing to a subsequence we can assume that $x_n^*\tow 0$. From Theorem $\ref{JR}$, by passing to a further subsequence, we can assume that   $(x_n^*)$  is a basic sequence and  there exists $(x_n)\subseteq X$ such that $x_n^*(x_k)=\delta_{nk}$ for any $n,k\in\NN$. It is routine to check that both $(x_n)$ and $(x_n^*)$ are linearly independent, and that $[x_{2n+1}]\subseteq [x_{2n}^*]^{\top}$, therefore $[x_{2n}^*]^{\top}$ is infinite dimensional. We also have that for any $n\in\NN$ $x_{2n}^{*}([x_{2n}^*]^{\top})=0$, therefore $[x_{2n}^*]^{\top}$ is infinite codimensional as well and Claim 2 is proved.

  In view of the previous Claims, by passing to a subsequence we may  assume that $(x_n^*)$ is a basic sequence and that $Z:=[x_n^*]^{\top}=[h_n^*]^{\top}$ is a half-space of $X$.

 Note that for any $z\in Z$, and for any $n\in\NN$, we have that
  $$
   h^{*}_n(Tz)=T^{*}h^{*}_n(z)=(\lambda_n h^{*}_n-e^{*})z=\lambda_n h^{*}_n(z)-e^{*}(z)=-e^{*}(z)
  $$

  If $Z\subseteq \ker{e^{*}}$ then we have that for all $n\in\NN$ and for all $z\in Z$, $ h^{*}_n(Tz)=0$. Hence $TZ\subseteq Z$ and we are done.

  Otherwise, we can find $z_0\in Z$ such that $z_0\notin\ker e^{*}$. Put $f:=Tz_0$ and for any $z\in Z$ define a scalar $\alpha_z$ by $\alpha_z:=\frac{e^{*}(z)}{e^{*}(z_0)}$. Then, for any $n\in\NN$ and $z\in Z$ we have:
  \begin{eqnarray*}
    h^{*}_n(Tz-\alpha_z f) &=&  h^{*}_n(Tz-\alpha_z f) \\
    &=& h^{*}_n\left(Tz-\frac{e^{*}(z)}{e^{*}(z_0)}Tz_0\right) \\
    &=& h^{*}_n(Tz)-\frac{e^{*}(z)}{e^{*}(z_0)}h^{*}_n(Tz_0)\\
    &=& -e^{*}(z)+\frac{e^{*}(z)}{e^{*}(z_0)}e^{*}(z_0)=0
  \end{eqnarray*}
Therefore, for any $z\in Z$ we have that $Tz-\alpha_z f\in Z$, so
$$
Tz=Tz-\alpha_z f +\alpha_z f \in Z+[f], \mbox{ for all } z\in Z
$$

It follows that $TZ\subseteq Z+[f]$, hence $Z$ is an almost-invariant half-space for $T$ with defect $[f]$.
\end{proof}

We are now ready to prove the general case, Theorem \ref{mainresult}, which we restate below in the equivalent formulation of almost-invariant half-spaces.

\begin{theorem} \label{general}
Let $X$ be a separable Banach space. Then any bounded operator $T\in\mathcal{B}(X)$ has an almost-invariant half-space with defect at most one.
\end{theorem}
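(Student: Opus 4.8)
The plan is to reduce to situations already settled by Theorems~\ref{oldmain} and~\ref{newmain}, supplemented by a few direct constructions of invariant half-spaces. The key point is that $\sigma(T)=\sigma(T^*)$, hence $\partial\sigma(T)=\partial\sigma(T^*)$: if $\partial\sigma(T)\not\subseteq\sigma_p(T)$ apply Theorem~\ref{oldmain}, and if $\partial\sigma(T^*)\not\subseteq\sigma_p(T^*)$ apply Theorem~\ref{newmain}. So I would reduce at once to the case where every point of the common topological boundary of the spectrum is simultaneously an eigenvalue of $T$ and of $T^*$.

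Next I would dispose of the easy degenerate subcases. If some $\mu\in\sigma_p(T)$ has $\dim\ker(T-\mu)=\infty$, then $\ker(T-\mu)$ is an infinite-dimensional $T$-invariant subspace; if it is also infinite-codimensional we are done, and otherwise $T-\mu$ has finite rank, $T$ acts as the scalar $\mu$ on the infinite-dimensional finite-codimensional subspace $\ker(T-\mu)$, and any half-space of $X$ lying inside it is $T$-invariant. The symmetric argument applied to $\overline{\Range(T-\mu)}$ takes care of an eigenvalue of $T^*$ with infinite multiplicity. So I may also assume every eigenvalue of $T$ and of $T^*$, in particular every point of $\partial\sigma(T)$, has finite multiplicity.

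Now split on whether $\sigma(T)$ is infinite or finite. If it is infinite then so is $\partial\sigma(T)$, so there are distinct common eigenvalues $\lambda_1,\lambda_2,\dots$; pick $v_n\in\ker(T-\lambda_n)\setminus\{0\}$ and $f_n\in\ker(T^*-\lambda_n)\setminus\{0\}$. From $\lambda_i f_i(v_j)=(T^*f_i)(v_j)=f_i(Tv_j)=\lambda_j f_i(v_j)$ one gets $f_i(v_j)=0$ whenever $i\neq j$. Put $Z:=\bigcap_{n\,\mathrm{odd}}\ker f_n$. Then $Z$ is closed and $T$-invariant (for $z\in Z$ and $n$ odd, $f_n(Tz)=(T^*f_n)(z)=\lambda_n f_n(z)=0$); it contains $v_n$ for every even $n$, hence is infinite-dimensional; and it is infinite-codimensional, since the linearly independent functionals $(f_n)_{n\,\mathrm{odd}}$ all vanish on $Z$ and therefore induce linearly independent functionals on $X/Z$. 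Thus $Z$ is an invariant half-space and we are done, with defect $0$. If instead $\sigma(T)$ is finite, decompose $X=X_1\oplus\dots\oplus X_k$ into Riesz spectral subspaces. Some $X_j$ is infinite-dimensional; if it is also infinite-codimensional it is itself an invariant half-space. Otherwise $X_j$ is finite-codimensional and $T$-invariant, so (half-spaces and defects being unaffected) I may replace $X$ by $X_j$ and reduce to $\sigma(T)=\{\mu\}$, that is, after a translation, to $T$ quasinilpotent, still with $\ker T\neq\{0\}$ and $\ker T^*\neq\{0\}$.

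The quasinilpotent case is where I expect the real difficulty. Here the resolvent hypothesis of Theorem~\ref{oldpaper} holds automatically, since $\rho(T)=\CC\setminus\{0\}$ is connected and is its own unbounded component, so it would be enough to produce a vector with minimal orbit; but such a vector need not exist, and a further reduction is required. If $\ker T$ is infinite-dimensional we finish as above (directly, or because $T$ is then finite rank, hence nilpotent, whence a half-space inside $\ker T$ works); if $\Range T^n$ is finite-dimensional for some $n$ then $T$ is nilpotent and again a half-space inside $\ker T$ suffices. Otherwise I would work with the proper closed $T$-invariant subspaces $\overline{\Range T}$ and $N:=\overline{\bigcup_n\ker T^n}$: whenever one of them is infinite-codimensional it is an invariant half-space; when $N$ is finite-dimensional the operator induced on $X/N$ is injective and quasinilpotent, so Theorem~\ref{oldmain} applies to it; and when one of them is finite-codimensional and infinite-dimensional one restricts $T$ to it and iterates. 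What remains after all of this is a genuinely quasinilpotent, weighted-shift-like operator, for which the existence of an almost-invariant half-space of defect at most one is the theorem of Sirotkin and Wallis~\cite{SW14}. The final step is then to transport the almost-invariant half-space produced on a quotient, or on a finite-codimensional restriction, back to $X$, checking at each stage that the defect does not exceed one; I expect this bookkeeping, together with pinning down the correct chain of reductions in the quasinilpotent case, to be the main obstacle.
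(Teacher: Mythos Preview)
Your reduction to the quasinilpotent case is correct and matches the paper exactly; the gap is in the quasinilpotent endgame. Your chain of reductions there is neither exhaustive nor shown to terminate: restricting to $N$ when $N$ is finite-codimensional makes no progress (the new $N$ for $T|_N$ is all of $N$), and restricting repeatedly to $\overline{\Range T}$ produces the chain $Y_n=\overline{T^nX}$ with no argument for stabilization. The closing appeal to \cite{SW14} is then doing all the work, but you have not identified what configuration actually ``remains'' after your reductions, nor checked that \cite{SW14} covers it with defect one; and you yourself flag the quotient bookkeeping as an unresolved obstacle.

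The paper closes the quasinilpotent case differently and self-containedly, and the key idea you are missing is that Theorem~\ref{newmain} gets used a second time. One runs the chain $Y_n=\overline{T^nX}$ directly: if some $Y_n$ is a half-space we are done, so assume each is finite-codimensional, and write $X=Y_1\oplus Z$ with $\dim Z<\infty$. Either some basis vector of $Z$ has a minimal $T$-orbit, and Theorem~\ref{oldpaper} finishes; or none does, and then a short argument (for each basis vector $z_j$ take the least $p_j$ with $T^{p_j}z_j\in[T^nz_j]_{n>p_j}$, set $p_0=\max_j p_j$) forces $Y_{p_0}=Y_{p_0+1}$, i.e.\ $T|_{Y_{p_0}}$ has dense range. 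Dense range makes $(T|_{Y_{p_0}})^*$ injective, so $0\in\partial\sigma\bigl((T|_{Y_{p_0}})^*\bigr)\setminus\sigma_p\bigl((T|_{Y_{p_0}})^*\bigr)$, and Theorem~\ref{newmain} applied to $T|_{Y_{p_0}}$ yields the almost-invariant half-space with defect at most one. Your quotient trick when $N$ is finite-dimensional is a valid alternative for that particular subcase (and the pull-back does keep the defect at one), but it does not handle the case where generalized kernels are dense, which is precisely where the minimal-orbit/dense-range dichotomy is needed.
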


\begin{proof}
If $\partial\sigma(T)\setminus\sigma_p(T)\neq\emptyset$ or $\partial\sigma(T^{*})\setminus\sigma_p(T^{*})\neq\emptyset$ then by applying Theorem \ref{oldmain} or Theorem \ref{newmain}, respectively, we obtain that $T$ has an almost-invariant half-space with defect at most one. Therefore, remains to consider the situation when any value in $\partial{\sigma(T)}=\partial{\sigma(T^*)}$ is an eigenvalue for both $T$ and $T^*$.

Easy calculations show that an eigenvector for $T^*$ cancels any eigenvector of $T$ corresponding to  different eigenvalues. It follows that when $\partial{\sigma(T)}=\partial{\sigma(T^*)}$ is infinite, we can actually build an invariant half-space as span of countably many eigenvectors corresponding to a countably infinite subset of $\partial{\sigma(T)}$ such that the complement in $\partial{\sigma(T)}$ is also infinite (see proof of Theorem 2.7 in \cite{PT13} for details).

Remains to consider the case when $\partial{\sigma(T)}$ is finite. In this situation we have $\partial{\sigma(T)}=\sigma(T)$. We can assume without loss of generality that $\sigma(T)$ is a singleton. Indeed if $\sigma(T)=\{\lambda_1,\lambda_2,\dots,\lambda_n\}$, for each $1\leq i\leq n$ consider the Riesz projection $P_i$ associated to $\lambda_i$. That is,  $P_i^2=P_i$,  $P_iT=TP_i$ (so each $X_i:=P_iX$ is a $T$-invariant subspace of $X$), $\sigma(T|_{P_iX})=\{\lambda_i\}$, and $P_1+P_2+\dots+P_n=I$. It follows that one of the subspaces $X_i$ is infinite dimensional and, for that particular $i$, consider the operator $S:=T|_{X_i}:X_i\to X_i$. If $S$ has an almost-invariant half-space $Y\subseteq X_i$, then the same $Y$ is also an almost-invariant half-space for $T$, with the same defect. Therefore, we may assume $\sigma(T)=\{\lambda\}$ and, by replacing $T$  with $T-\lambda I$, we may also assume $\lambda=0$.

 Next we show that either we can find a vector $z$ such that the orbit $\{T^nz\}$ is a minimal sequence, or there exists an infinite dimensional $T$-invariant subspace $Y$ such that restriction of $T$ to $Y$ has dense range. The argument is similar to the second half of the  proof of Theorem 2.7 in \cite{PT13}, we include it here for the sake of completeness. For any $n\in\mathbb{N}$, denote by $Y_{n}=\overline{T^{n}X}$, with $Y_0:=X$. We have that each  $Y_n$ is invariant under $T$, $Y_{n+1}=\overline{TY_n}$ and $X\supseteq Y_1\supseteq Y_2\supseteq\dots.$ Also note that for any $j,n\in\mathbb{N}$ and any $y\in Y_j$ we have that $T^{n}(y)\in Y_{j+n}$. Note that we can assume each $Y_j$ is infinite dimensional; indeed, otherwise, if $j$ is the smallest index for which $Y_j$ is finite dimensional, then any half-space of $Y_{j-1}$ containing $Y_j$ is an invariant half-space for $T$. If $Y_1$ is of infinite codimension in $X$, then $Y_1$ is an invariant half-space for $X$ and we are done.  Therefore we can assume that $Y_1$ is of finite codimension in $X$, hence complemented in $X$, and we can write $X=Y_1\oplus Z$, where $Z$ is finite dimensional. If $Z=\{0\}$ then $T$ has dense range. Otherwise, let $\{z_1,z_2,\dots z_k\}$ be a basis for $Z$ and assume the orbit $\{T^{n}z_j\}_n$ is not minimal for any $1\leq j\leq k$. For any $1\leq j\leq k$ denote by $p_j$ the smallest index such that $T^{p_j}z_j\in[T^n z_j]_{n\neq p_j}$. It is easy too see that for this choice of $p_j$ we actually have that $T^{p_j}z_j\in[T^n z_j]_{n > p_j}$ (see, e.g. Lemma 2.6 in \cite{PT13}) , thus $T^{p_j}z_j\in Y_{p_j+1}$, for any $1\leq j\leq k$. If we let $p_0:=\max\{p_1,  p_2, \dots p_k\}$, it follows that $T^{p_0}z_j=T^{p_{0}-p_{j}}(T^{p_j}z_j)\in Y_{p_0+1}$ for any $1\leq j\leq k$. Therefore, since $\{z_1,z_2,\dots z_k\}$ is a basis for $Z$, we have that $T^{p_0}z\in Y_{p_0+1}$ for any $z\in Z$. We also have that $T^{p_0}y\in Y_{p_0+1}$ for any $y\in Y_1$, and since $X=Y_1\oplus Z$ it follows that $T_{p_0}x\in Y_{p_0+1}$ for any $x\in X$. This means that $\overline{T^{p_0}X}\subseteq Y_{p_0+1}$, so $Y_{p_0}\subseteq Y_{p_0+1}$. On the other hand, $Y_{p_0+1}\subseteq Y_{p_0}$, therefore $Y_{p_0+1}=Y_{p_0}$ and the last equality means that $T_{|Y_{p_0}}$ has dense range.

If we find a vector $z$ such that the orbit $\{T^nz\}$ is a minimal sequence,  we can apply Theorem \ref{oldpaper} and obtain that $T$ has an almost-invariant half-space with defect at most one. Otherwise, there exists $Y$ an infinite dimensional subspace of $X$ such that $T_{|Y}$ has dense range. Consider $S:=T|_{Y}:Y\to Y$. Since $S$ has dense range it follows that $S^{*}$ is injective. Note that $\sigma(S)=\sigma(T)=\{0\}$, therefore $0\in\sigma(S^*)=\sigma(S)$ is not an eigenvalue. We can now apply Theorem \ref{newmain} to conclude that $S$, hence also $T$,  has an almost-invariant half-space with defect at most one.
\end{proof}

\section{Perturbations of small norm} \label{section small norm}

 We proved in the previous section that for any $T\in\mathcal{B}(X)$ we can find a rank one perturbation $F$ such that $T+F$ has an invariant half-space. In this section we show that, under the same spectral assumptions as in Theorem \ref{newmain},  such $F$ may be chosen to be small in norm. When the spectral conditions are not satisfied we still can find a finite rank perturbation $F$ of small norm, but not necessarily rank one, such that $T+F$ has an invariant half-space. In \cite{TW17} the authors proved the following theorem, which we will also use here.

 \begin{theorem}\cite[Proposition 2.2]{TW17} \label{oldsmallnorm}
 Let $X$ be an infinite dimensional Banach space and let  $T \in
{\mathcal B}(X)$ such that there exists $\mu\in\partial\sigma(T)$ that is not an eigenvalue.  Then for any $\veps>0$ there exists a rank one operator $F$ with $\norm{F}<\veps$ such that $T+F$ has an invariant half-space.
\end{theorem}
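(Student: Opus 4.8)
The plan is to run the argument behind Theorem~\ref{oldmain} — realize the invariant half-space as the closed span of a subsequence of approximate eigenvectors of $T$ at $0$ — while choosing that subsequence sparse enough that the rank-one operator straightening it has norm below $\veps$. As there, we may assume $\mu=0$, so $0\in\partial\sigma(T)\setminus\sigma_p(T)$. Fix $\lambda_n\in\rho(T)$ with $\lambda_n\to 0$. Since $0\in\sigma(T)$ we have $\norm{(\lambda_nI-T)^{-1}}\ge 1/\dist(\lambda_n,\sigma(T))\ge 1/\abs{\lambda_n}\to\infty$, so the uniform boundedness principle provides $e\in X$ (necessarily $e\neq 0$) with $\norm{(\lambda_nI-T)^{-1}e}\to\infty$ along a subsequence. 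Relabelling, set $h_n:=(\lambda_nI-T)^{-1}e$ and $x_n:=h_n/\norm{h_n}$; from $(\lambda_nI-T)h_n=e$ we obtain
$$
Tx_n=\lambda_nx_n-\frac{e}{\norm{h_n}}\longrightarrow 0 .
$$
Thus $(x_n)$ is normalized with $Tx_n\to 0$, and since $0$ is not an eigenvalue of $T$ it has no norm-convergent subsequence (a limit would be a unit eigenvector at $0$); by the Bessaga-Pelczynski selection principle we may therefore pass to a subsequence making $(x_n)$ a basic sequence, say with basis constant $K$, and we let $(x_n^*)\subseteq X^*$ be Hahn-Banach extensions of its coefficient functionals, so that $x_m^*(x_n)=\delta_{mn}$ and $\norm{x_n^*}\le 2K$.

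The new, quantitative, ingredient is this: a subsequence of a basic sequence is again basic with basis constant no larger, while still $\norm{h_n}\to\infty$; hence we may thin out once more and relabel so that
$$
\sum_{n=1}^{\infty}\frac{1}{\norm{h_n}}<\frac{\veps}{4K\norm{e}} .
$$
Set $Y:=[x_{2n}]$ and define $c\bigl(\textstyle\sum_n a_nx_{2n}\bigr):=\sum_n a_n/\norm{h_{2n}}$ for $y=\sum_n a_nx_{2n}\in Y$. Since $\abs{a_n}=\abs{x_{2n}^*(y)}\le 2K\norm{y}$, this series converges absolutely and $\abs{c(y)}\le 2K\norm{y}\sum_n\norm{h_{2n}}^{-1}\le\frac{\veps}{2\norm{e}}\norm{y}$, so $c$ is a bounded functional on $Y$ with $\norm{c}\le\veps/(2\norm{e})$; extend it to $X^*$ preserving the norm.

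Now let $Fx:=c(x)e$. Then $F$ has rank at most one and $\norm{F}=\norm{c}\,\norm{e}\le\veps/2<\veps$ (if $c=0$ then $Y$ is already $T$-invariant, and any sufficiently small rank-one operator with range inside $Y$ does the job). To verify $(T+F)Y\subseteq Y$, take $y=\sum a_nx_{2n}\in Y$ and put $s_N=\sum_{n\le N}a_nx_{2n}$, so $s_N\to y$ and $Ts_N\to Ty$; writing
$$
Ts_N=\sum_{n\le N}a_n\lambda_{2n}x_{2n}-\Bigl(\sum_{n\le N}\frac{a_n}{\norm{h_{2n}}}\Bigr)e
$$
and noting the parenthesized scalar tends to $c(y)$, we see the partial sums $\sum_{n\le N}a_n\lambda_{2n}x_{2n}$ converge to $Ty+c(y)e$; since these partial sums all lie in the closed subspace $Y$, it follows that $(T+F)y=Ty+c(y)e\in Y$. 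Finally $Y$ is a half-space: it is infinite dimensional because the basic sequence $(x_{2n})$ is linearly independent, and of infinite codimension because $Y\subseteq\bigcap_n\Ker x_{2n+1}^*$ while the functionals $\{x_{2n+1}^*\}$ are linearly independent in $X^*$.

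The main obstacle — and the one place where the hypothesis that $\mu$ is not an eigenvalue is used — is the passage to a basic subsequence of the approximate eigenvectors in the first paragraph; the remaining steps (stability of basicness under subsequences, the absolute-convergence estimates, and the interchange of limits above) are routine bookkeeping, and the only real gain over Theorem~\ref{oldmain} is the freedom to make $\sum_n\norm{h_n}^{-1}$, hence $\norm{F}$, as small as desired. If one prefers to avoid the general selection principle, one can instead pass to $T^*$, which satisfies $0\notin\sigma_p(T^*)$ whenever $T$ has dense range, and run the $w^*$-argument of Theorem~\ref{newmain} built on Theorem~\ref{JR}, taking the same care with norms; the case where $T$ does not have dense range is handled separately, since then $\overline{\Range T}$ is already a $T$-invariant subspace of infinite dimension.
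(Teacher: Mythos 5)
The paper never proves this statement---it is imported verbatim from \cite[Proposition 2.2]{TW17}---so the natural benchmark is the paper's proof of the dual companion, Theorem \ref{newsmallnorm}, and your argument is precisely its primal mirror image: approximate eigenvectors $x_n=h_n/\norm{h_n}$ with $Tx_n=\lambda_nx_n-e/\norm{h_n}$ in place of the functionals $x_n^*$, a summable thinning $\sum_n\norm{h_n}^{-1}<\veps/(4K\norm{e})$ in place of $\sum_n\norm{h_n^*}^{-1}<\veps/M$, and a rank-one perturbation $e$-directional in range ($F=c\otimes e$) rather than $e^*$-directional in domain ($F=e^*\otimes f$). The quantitative bookkeeping is correct: $\abs{a_n}=\abs{x_{2n}^*(y)}\le 2K\norm{y}$ gives $\norm{c}\le\veps/(2\norm{e})$, the partial-sum computation legitimately shows $Ty+c(y)e\in Y$, and $Y=[x_{2n}]$ is a half-space for the reasons you give. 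This is, as far as I can tell, essentially the proof in \cite{TW17}.

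The one step whose justification does not stand as written is the extraction of a basic subsequence of $(x_n)$. ``Normalized with no norm-convergent subsequence'' does not license an appeal to the Bessaga--Pe{\l}czy\'nski selection principle: that principle requires the sequence to be semi-normalized and \emph{weakly null}, and a normalized sequence with no norm-convergent subsequence may have no basic subsequence at all --- e.g.\ $x_n=(e_1+e_{n+1})/\sqrt2$ in $\ell_2$, every subsequence of which converges weakly to $e_1/\sqrt2\neq0$ and hence cannot be basic. What actually rescues the step here is that $Tx_n\to0$ in norm together with $0\notin\sigma_p(T)$ forces every weakly convergent subsequence of $(x_n)$ to be weakly null (so Bessaga--Pe{\l}czy\'nski applies to it), while the case in which no subsequence converges weakly must be handled separately in a non-reflexive space (via Rosenthal's $\ell_1$-theorem or the general selection principle). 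This is exactly the content of the proof of Theorem \ref{oldmain} (\cite[Theorem 2.3]{PT13}), which you announce you are re-running, so the defect is one of citation and justification rather than of strategy; but the ``therefore'' in your first paragraph is not a valid inference on its own. A minor further caveat: the alternative route sketched in your last sentence is not quite right as stated, since $\overline{\Range T}$ may have finite nonzero codimension (hence fail to be a half-space) and the $w^*$-argument of Theorem \ref{newmain} uses separability of $X$; the main argument does not need this fallback.
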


Thus, this theorem gives the existence of perturbations of small norm when the boundary of the spectrum of $T$ has non-eigenvalues. We begin by proving a companion theorem to the one above, in the situation when $T^*$ satisfies a similar type of spectral condition.

 \begin{theorem} \label{newsmallnorm}
 Let $X$ be a Banach space and $T\in\mathcal{B}(X)$ a bounded operator such that $\partial\sigma(T^{*})\setminus\sigma_p(T^{*})\neq\emptyset$. Then for any $\veps>0$ there exists a rank one operator $F$ with $\norm{F}<\veps$ such that $T+F$ has an invariant half-space.
 \end{theorem}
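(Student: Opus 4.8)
The plan is to mimic the construction in the proof of Theorem \ref{newmain}, but to keep careful quantitative control on the size of the perturbation. Recall that in that proof one produces a half-space $Z = [h_n^*]^{\top}$ on which $T$ satisfies $h_n^*(Tz) = -e^*(z)$ for every $z \in Z$ and every $n$; the perturbation that makes $Z$ invariant is, roughly, $z \mapsto -\alpha_z f$ where $f = Tz_0$ for a fixed $z_0 \in Z \setminus \ker e^*$ and $\alpha_z = e^*(z)/e^*(z_0)$. This $F$ is rank one (range $[f]$), but there is no reason for it to be small. The point is that we have a great deal of freedom: we may pass to further subsequences of $(x_n^*)$, and more importantly we may scale. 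After normalizing $\lambda = 0$ and choosing $\lambda_n \in \rho(T^*)$ with $\lambda_n \to 0$ and $\norm{(\lambda_n I - T^*)^{-1}e^*} \to \infty$, set $h_n^* = (\lambda_n I - T^*)^{-1}e^*$ as before. The identity $T^* h_n^* = \lambda_n h_n^* - e^*$ holds exactly, and on $Z := [h_n^*]^{\top}$ we still get $h_n^*(Tz) = \lambda_n h_n^*(z) - e^*(z) = -e^*(z)$.

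First I would run Claims 1 and 2 of Theorem \ref{newmain} verbatim (separability of $X$ is used there, and here $X$ need only be a Banach space — but note $[h_n^*]^{\top} = [h_n^*]_{w^*}{}^{\top}$ is governed by a separable subspace of $X^*$, or alternatively one restricts attention to the closed $T$-invariant separable subspace generated by a suitable vector, exactly as in the reduction at the end of Section \ref{def}; so separability may be assumed). This yields, after passing to a subsequence, that $Z = [h_n^*]^{\top}$ is a half-space and $T$ maps $Z$ into $Z + [f]$ with $f = Tz_0$, $z_0 \in Z$, $e^*(z_0) \neq 0$. The perturbation is $F z := -\dfrac{e^*(z)}{e^*(z_0)}\, P f$ extended appropriately — but here is the key idea for smallness: we are free to replace $e^*$ by $c\,e^*$ for any scalar $c \neq 0$ without changing $h_n^*$'s \emph{directions} only up to scaling, so instead the right move is to observe that $\norm{h_n^*} \to \infty$ forces $e^*(z_0)$ to be "large relative to $f$" in a controllable way, OR — more robustly — to define $F$ on all of $X$ by choosing a norm-one functional $\phi \in X^*$ with $\phi|_Z = e^*|_Z / \norm{e^*|_Z}$ (Hahn–Banach) and setting $F x := -t\,\phi(x)\,g$ for a suitable vector $g$ and scalar $t > 0$; one then verifies $(T+F)Z \subseteq Z$ requires $g$ to be a specific element of the form $f$ scaled, and $\norm{F} \le t \norm{g}$. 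The scalar $t$ is pinned down by the equation, so the real control comes from choosing the subsequence and the vector $z_0 \in Z$ so that $f = Tz_0$ has small norm relative to $e^*(z_0)$; equivalently, choosing $z_0$ in the unit sphere of $Z$ maximizing $\abs{e^*(z_0)}$, which is bounded below since $e^*|_Z \neq 0$.

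The main obstacle — and the step deserving the most care — is exactly this last point: showing that one can arrange $\norm{F} < \veps$. Unlike the $\partial\sigma(T)$ case of Theorem \ref{oldsmallnorm}, where smallness comes from the freedom in choosing approximate eigenvectors, here $F$ is essentially forced by the requirement that it kill the single functional $e^*$ on $Z$. I expect the resolution to parallel the Hilbert-space/reflexive arguments in \cite{PT13} and \cite{TW17}: one does \emph{not} use the crude $z_0$ above but instead exploits that $\norm{h_n^*} \to \infty$ to produce, for each $n$, an "almost invariant" direction where the defect vector can be taken of norm $O(1/\norm{h_n^*}) \to 0$; more precisely, rewrite $T^* h_n^* = \lambda_n h_n^* - e^*$ as $T^* x_n^* = \lambda_n x_n^* - e^*/\norm{h_n^*}$ with $x_n^* = h_n^*/\norm{h_n^*}$ semi-normalized, apply Theorem \ref{JR} to $(x_n^*)$ to get a biorthogonal system $(x_n^*, x_n)$, and build the half-space as $[x_{2n}^*]^{\top}$; then the natural rank-one correction has defect vector built from $e^*/\norm{h_n^*_{(k)}}$ along the biorthogonal vectors $x_{n}$, whose norm is $\le \sup_j\norm{x_j} \cdot \norm{e^*}/\norm{h^*_{n_k}} \to 0$. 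Passing to a sufficiently late term of the subsequence then gives $\norm{F} < \veps$. Thus the skeleton is: (1) set up $h_n^*$, $x_n^*$ and the exact identity; (2) extract a $w^*$-null basic subsequence with a bounded biorthogonal sequence $(x_n)$ via Theorems \ref{JR}, as in Claims 1–2 of Theorem \ref{newmain}; (3) take $Z = [x_{2n_k}^*]^{\top}$, a half-space; (4) write down the rank-one $F$ correcting the $-e^*/\norm{h^*}$ term and estimate its norm by $C/\norm{h^*_{n_k}}$; (5) choose $k$ large. The only genuinely delicate verification is that $F$ can simultaneously be made rank one \emph{and} small — i.e.\ that the correction needed to absorb $e^*/\norm{h_{n_k}^*}$ on $Z$ is a \emph{single} rank-one operator, not a growing number of them — which is where restricting to $[x_{2n_k}^*]^{\top}$ for a \emph{single} index $k$ (rather than the common pre-annihilator of all of them) is essential.
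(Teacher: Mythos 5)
Your setup is the same as the paper's (the functionals $h_n^*=(\lambda_n I-T^*)^{-1}e^*$, the normalized $x_n^*$, the biorthogonal bounded sequence $(x_n)$ from Theorem \ref{JR}, and a pre-annihilator half-space $Z$), but the step you yourself flag as "the only genuinely delicate verification" is exactly where the argument breaks, and your proposed resolution does not work. You suggest taking $Z=[x_{2n_k}^*]^{\top}$ "for a single index $k$ rather than the common pre-annihilator of all of them" and correcting only the term $e^*/\norm{h_{n_k}^*}$, getting a defect vector of norm $O(1/\norm{h_{n_k}^*})$. But the pre-annihilator of a single functional is a hyperplane of codimension one, not a half-space; and if instead $Z$ is the common pre-annihilator (which is what is needed for infinite codimension), then invariance of $Z$ under $T+e^*\otimes g$ forces $h_n^*(g)=1$ for \emph{every} $n$ in the subsequence simultaneously, since $h_n^*(Tz)=-e^*(z)$ holds for every $n$. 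A single-term correction $g=\norm{h_{n_k}^*}^{-1}x_{n_k}$ satisfies $h_{n_j}^*(g)=\delta_{jk}\norm{h_{n_j}^*}/\norm{h_{n_k}^*}$, so it repairs only one of the infinitely many constraints and $Z$ is not invariant.

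The missing idea in the paper's proof is the series construction: after passing to a sparse subsequence so that $\sum_n\norm{h_n^*}^{-1}<\veps/M$ (possible since $\norm{h_n^*}\to\infty$), one sets $f:=\sum_{n}\norm{h_n^*}^{-1}x_n$. Biorthogonality gives $x_n^*(f)=\norm{h_n^*}^{-1}$, hence $h_n^*(f)=1$ for \emph{all} $n$ at once, while $\norm{f}\le M\sum_n\norm{h_n^*}^{-1}<\veps$. Then $F:=e^*\otimes f$ is rank one, $\norm{F}<\veps$, and $h_n^*((T+F)z)=-e^*(z)+e^*(z)h_n^*(f)=0$ for all $n$ and all $z\in Z$, so the common pre-annihilator $Z$ is invariant. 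In short: smallness comes not from choosing one late index but from summing over a lacunary subsequence, and the common pre-annihilator is essential, not something to be avoided. (Your side remark about reducing to separable $X$ is fine and is a point the paper glosses over.)
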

\begin{proof}
  Fix $\veps>0$. Let $\lambda\in\partial\sigma(T^{*})\setminus\sigma_p(T^{*})$ and, as before,  without loss of generality assume $\lambda=0$. Given $\lambda_n\in\rho(T^*)$, $\lambda_n \to 0$, consider vectors $e^{*}\in X^*$, $\norm{e^*}=1$, $h_n^*:=(\lambda_nI-T^{*})^{-1}e^*$ and $x_n^*:=h_n^*/\norm{h_n^*}$, and $(x_n)$ in $X$ a  as in the proof of Theorem \ref{newmain}. By passing to a subsequence, consider also $(x_n)$ a bounded sequence in $X$ biorthogonal to $(x_n^*)$, as given by Theorem \ref{JR}. Let $M$ be such that $\norm{x_n}\leq M$ for all $n\in\mathbb{N}$ and by passing to a further  subsequence assume that $\sum_{n=1}^{\infty}\norm{h_n^*}^{-1}<\veps/M$ (recall that $\norm{h_n^*}\to\infty$) , and $Z:=[x_n^*]^{\top}$ is a half-space.

  Define $f\in X$ by
  $$
      f:=\sum_{n=1}^{\infty}\frac{1}{\norm{h_n^*}}x_n
  $$

We have
$$
\Bignorm{\sum_{n=1}^{\infty}\frac{1}{\norm{h_n^*}}x_n}\leq\sum_{n=1}^{\infty}\Bignorm{\frac{1}{\norm{h_n^*}}x_n}\leq\sum_{n=1}^{\infty}\frac{M}{\norm{h_n^*}}\leq M\frac{\veps}{M}=\veps
$$

Therefore $f$ is well defined and $\norm{f}\leq\veps$. Note that for all $n$, the bounded functional $h_n^*$ satisfies
$$
h_n^*(f)=\norm{h_n^*}x_n^*(f)=\norm{h_n^*}x_n^*\left(\sum_{i=1}^{\infty}\frac{1}{\norm{h_i^*}}x_i\right)=\norm{h_n^*}\sum_{i=1}^{\infty}\frac{1}{\norm{h_i^*}}x_n^*(x_i)=1
$$

Consider now the rank one operator $F:=e^*\otimes f$, that is, for any $x\in X$,  $F(x)=e^*(x)f$. We have that $\norm{F}=\norm{e^*}\norm{f}<\veps$ and will show that $Z$ is an invariant half-space for $T+F$. To this end, it is enough to show that for any $z\in Z$, and any $n\in\mathbb{N}$, we have that $h_n^*(Tz+Fz)=0$. Indeed:

\begin{eqnarray*}
h_n^*(Tz+Fz)&=&h_n^*(Tz)+h_n^*(Fz)=T^*h_n^*(z)+e^*(z)h_n^*(f)\\
&=&\lambda_n h_n^*(z)-e^*(z)+e^*(z)\\
&=&\lambda_n h_n^*(z)=0.
\end{eqnarray*}

Therefore $(T+F)(Z)\subseteq Z$ and this concludes the proof.
\end{proof}

Next we will prove the result in its full generality, when no assumptions on the spectrum are made.

\begin{theorem}
 Let $X$ be a Banach space and $T\in\mathcal{B}(X)$ a bounded operator. Then for any $\veps>0$ there exists a finite rank operator $F$ with $\norm{F}<\veps$ such that $T+F$ has an invariant half-space. Moreover, if $\partial\sigma(T)\setminus\sigma_p(T)\neq\emptyset$ or $\partial\sigma(T^{*})\setminus\sigma_p(T^{*})\neq\emptyset$, $F$ can be taken to be rank one.
\end{theorem}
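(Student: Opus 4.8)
The plan is to reduce the general statement to the two spectral cases already handled by Theorems \ref{oldsmallnorm} and \ref{newsmallnorm}, mimicking the structure of the proof of Theorem \ref{general} but keeping track of norms throughout. First I would dispose of the ``moreover'' clause: if $\partial\sigma(T)\setminus\sigma_p(T)\neq\emptyset$ apply Theorem \ref{oldsmallnorm}, and if $\partial\sigma(T^{*})\setminus\sigma_p(T^{*})\neq\emptyset$ apply Theorem \ref{newsmallnorm}; either way we get a rank one $F$ with $\norm{F}<\veps$ and $T+F$ having an invariant half-space. So it remains to treat the case where every point of $\partial\sigma(T)=\partial\sigma(T^*)$ is an eigenvalue of both $T$ and $T^*$, and here we only need a finite rank (not necessarily rank one) perturbation.

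In that remaining case I would follow the case analysis of Theorem \ref{general}. If $\partial\sigma(T)$ is infinite, the invariant half-space is built directly as the closed span of countably many eigenvectors, so $F=0$ and there is nothing to estimate. If $\partial\sigma(T)$ is finite, then $\sigma(T)=\partial\sigma(T)$; passing to a Riesz projection summand $X_i=P_iX$ on which $\sigma(T|_{X_i})$ is a singleton $\{\lambda\}$, and replacing $T$ by $T-\lambda I$, we may assume $\sigma(T)=\{0\}$. Here one must be slightly careful: an almost-invariant half-space $Y\subseteq X_i$ for $S:=T|_{X_i}$ with a small-norm perturbation $G$ on $X_i$, $\norm{G}<\veps$, extends to $X$ by setting $F:=G P_i$, which has $\norm{F}\le\norm{G}\,\norm{P_i}$; so to control $\norm{F}<\veps$ I would instead run the argument on $X_i$ asking for a perturbation of norm $<\veps/\norm{P_i}$. (If $0\notin\partial\sigma(T)$ to begin with, i.e. the quasinilpotent summand is trivial, we are already done by the first paragraph.) Now with $\sigma(T)=\{0\}$, repeat the dichotomy from Theorem \ref{general}: either some orbit $\{T^n z\}$ is minimal, or there is an infinite dimensional $T$-invariant subspace $Y$ with $T|_Y$ having dense range.

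In the first subcase, $\{T^n z\}$ minimal plus $\sigma(T)=\{0\}$ (so condition \eqref{rho} of Theorem \ref{oldpaper} holds trivially) gives an almost-invariant half-space with defect at most one via Theorem \ref{oldpaper} --- but that theorem as stated does not assert a small-norm perturbation, so I would instead observe that when $\sigma(T)=\{0\}$ we actually have $0\in\partial\sigma(T)$ is... an eigenvalue, which does not directly help; the cleaner route is to note that in this subcase we can pass to $S:=T|_{[T^nz]}$ and apply the argument of Theorem \ref{newsmallnorm} after checking its hypothesis holds for this restriction, which it does precisely because $0\in\sigma(S)$ is not an eigenvalue of $S^*$ when $S$ has dense range. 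In the second subcase, set $S:=T|_Y$; since $S$ has dense range, $S^*$ is injective, and $\sigma(S)=\sigma(T)=\{0\}$, so $0\in\partial\sigma(S^*)\setminus\sigma_p(S^*)$, and Theorem \ref{newsmallnorm} applied to $S$ yields a rank one $G$ on $Y$ with $\norm{G}$ as small as we like and $S+G$ having an invariant half-space $Y_0\subseteq Y$; then $Y_0$ is a half-space of $X$ and, writing $X=Y\oplus W$ when $Y$ is complemented (or otherwise composing with a projection at the cost of its norm), we extend $G$ to a finite rank $F$ on $X$ with $\norm{F}$ still below $\veps$ and $(T+F)Y_0\subseteq Y_0$.

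The main obstacle I anticipate is the bookkeeping when $Y$ (or the relevant restriction subspace) is not complemented in $X$, or when the Riesz projections have large norm: a small-norm perturbation on a subspace need not extend to a small-norm perturbation on the whole space without a bounded projection, and the ``defect one'' may be lost upon extension (hence only finite rank in general). I would handle this by always reducing, before invoking Theorems \ref{oldsmallnorm}/\ref{newsmallnorm} on a subspace $Y$, to the case where $Y$ is complemented --- which is automatic for Riesz summands and can be arranged for $Y_1=\overline{TX}$ when it has finite codimension, the only situation in which the dense-range branch is actually used --- and then choosing the target norm on $Y$ to be $\veps$ divided by the norm of the relevant projection. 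The remaining verifications (that the extended operator still has finite rank, that $Y_0$ remains invariant, that $\norm{F}<\veps$) are routine.
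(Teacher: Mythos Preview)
Your reduction to the quasinilpotent case is fine, and your handling of the ``moreover'' clause and the infinite-$\partial\sigma(T)$ case matches the paper. The difficulty is the quasinilpotent case itself, where you try to rerun the orbit/dense-range dichotomy from Theorem~\ref{general}. This has two genuine gaps.

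First, in the minimal-orbit subcase your proposed fix does not work: if $\{T^n z\}$ is minimal then $z\notin[T^n z]_{n\ge 1}$, so $T|_{[T^n z]}$ does \emph{not} have dense range, and you cannot invoke Theorem~\ref{newsmallnorm} on that restriction. Theorem~\ref{oldpaper} gives no norm control, so this branch is left unresolved. Second, in the dense-range subcase the space on which $T$ has dense range is $Y_{p_0}=\overline{T^{p_0}X}$, not $Y_1$; there is no reason $Y_{p_0}$ is complemented, so you cannot in general extend a small-norm perturbation from $Y_{p_0}$ to $X$ with controlled norm. Your claim that ``this is the only situation in which the dense-range branch is used'' conflates $Y_1$ with $Y_{p_0}$.

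The paper avoids both problems by a different and much cleaner device: it never restricts to a subspace at all in the quasinilpotent step. Assuming $\sigma(T)=\{0\}$ with $0$ an eigenvalue of both $T$ and $T^*$, one first disposes of the easy cases ($\ker T$ infinite dimensional, or $\overline{TX}$ infinite codimensional). Otherwise $n:=\dim\ker T$ and $m:=\codim\overline{TX}$ are finite, and one writes $X=\ker T\oplus Y=\overline{TX}\oplus Z$. A finite-rank $G$ mapping a basis of $\ker T$ onto (part of) a basis of $Z$, extended by zero on $Y$, makes $T+\alpha G$ injective (if $n\le m$) or have dense range (if $m<n$) for every $\alpha\neq 0$. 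Since the essential spectrum is stable under compact perturbations and the spectrum of a compact perturbation of a quasinilpotent is countable with $0$ as its only accumulation point, $0\in\partial\sigma(T+\alpha G)$; and now $0\notin\sigma_p(T+\alpha G)$ (resp.\ $0\notin\sigma_p((T+\alpha G)^*)$). Choosing $\alpha$ with $\norm{\alpha G}<\veps/2$ and applying Theorem~\ref{oldsmallnorm} (resp.\ Theorem~\ref{newsmallnorm}) to $T+\alpha G$ on the whole space $X$ yields a rank-one $F_0$ with $\norm{F_0}<\veps/2$; then $F:=\alpha G+F_0$ works. No complementation or extension issues arise, and the rank of $F$ is at most $\min(n,m)+1$.
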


\begin{proof}
Fix $\veps>0$. Note that the ''moreover'' part is simply Theorem \ref{oldsmallnorm} when $\partial\sigma(T)\setminus\sigma_p(T)\neq\emptyset$ and Theorem \ref{newsmallnorm} when $\partial\sigma(T^{*})\setminus\sigma_p(T^{*})\neq\emptyset$. Remains to consider the case when $\partial\sigma(T)=\partial\sigma(T^*)$ consist only of eigenvalues. If these sets are infinite, the same argument as in the proof of Theorem 2.7 in \cite{PT13} (also used in Theorem \ref{general} in the previous section ) shows that $T$ actually has an \emph{invariant} half-space. When $\partial\sigma(T)=\partial\sigma(T^*)$ is finite, we can assume as we did in the proof of Theorem \ref{general} that $T$ is quasinilpotent, and $0$ is an eigenvalue for both $T$ and $T^*$.

Denote by $N$ the kernel of $T$ and by $R$ the closure of the range of $T$. If $N$ is infinite dimensional, then any subspace of $N$ that is a half-space will be an invariant half-space for $T$.  Since $T^*$ is not injective it follows that the range of $T$ is not dense in $X$. Clearly $R$ is infinite dimensional, and if it is infinite codimensional as well, then $R$ is an invariant half-space for $T$. Therefore we may assume that $N$ is finite dimensional and $R$ is finite codimensional. Denote by $n:=\dim(N)$ and by $m:=\codim(R)$, and write $X=N\oplus Y$ and $X=R\oplus Z$. Fix bases $\{f_1, f_2, \dots, f_n\}$ of $N$ and $\{g_1, g_2, \dots, g_ m\}$ of $Z$. We will consider separately the cases $n\leq m$ and $n>m$.

If $n\leq m$,  consider the rank $n$ operator $G:N\to Z$ defined by $G(f_i)=g_i$, for any $1\leq i\leq n$. Extend $G$ to $X$ by letting $G|_{Y}=0$. It is easy to verify that for any scalar $\alpha\neq 0$, $T+\alpha G$ is injective. Recall that the essential spectrum is stable under compact perturbations, and that the spectrum of a compact perturbation of a quasinilpotent operator is at most countable, with $0$ the only possible accumulation point (see e.g. \cite{AA02}, Corollary 7.50)). It follows that $0\in\partial\sigma(T+\alpha G)$ and since $T+\alpha G$ is injective, $0$ is not an eigenvalue for $T+\alpha G$. Choose $\alpha>0$ such that $\norm{\alpha G}<\veps/2$. We can apply Theorem \ref{oldsmallnorm} for  $T+\alpha G$ and find $F_0\in\mathcal{B}(X)$ a rank one operator such that $\norm{F_0}<\veps/2$ and $T+\alpha G + F_0$ has an invariant half-space. Then $F:=\alpha G +F_0$ is an operator of rank $n+1$ that satisfies the conclusion.

If $m<n$, consider the rank $m$ operator $G:N\to Z$ defined by $G(f_i)=g_i$ for any $1\leq i\leq m$, and $G(f_i)=0$ for any $m<i\leq n$. Extend  $G$ to a rank $m$ operator on $X=N\oplus Y$ by letting $G|_{Y}=0$. It follows easily that for any scalar $\alpha\neq 0$, $T+\alpha G$ has dense range. The same argument as in the previous paragraph gives that $0\in\partial\sigma(T+\alpha G)$ for any $\alpha\neq 0$. Since $T+\alpha G$ has dense range, it follows that $(T+\alpha G)^*$ is injective and $0\in\partial\sigma(T+\alpha G)^*$ is not an eigenvalue for $(T+\alpha G)^*$ . Pick $\alpha>0$ such that $\norm{\alpha G}<\veps/2$, and apply Theorem \ref{newsmallnorm} for $T+\alpha G$. As before, we can find $F_0\in\mathcal{B}(X)$ a rank one operator such that $\norm{F_0}<\veps/2$ and $T+\alpha G + F_0$ has an invariant half-space. Setting $F:=\alpha G +F_0$ we obtain the conclusion, and this ends the proof.

\end{proof}

\end{document}